\newcommand{\E}{\mathbb{E}}
\newcommand{\1}{\textbf{1}}
\newcommand{\R}{\mathbb{R}}
\newcommand{\p}[1]{\mathbb{P}\left( #1 \right)}
\newcommand{\red}{}
\DeclareMathOperator{\Var}{Var}
\newtheorem{theorem}{Theorem}
\newtheorem{lemma}[theorem]{Lemma}
\newtheorem{corollary}[theorem]{Corollary}
\theoremstyle{remark}
\newtheorem{remark}[theorem]{Remark}
\theoremstyle{definition}
\title{Typical values of extremal-weight combinatorial structures with independent symmetric weights}
\author{Yun Cheng}
\author{Yixue Liu}
\author{Tomasz Tkocz}
\author{Albert Xu}
\address{Carnegie Mellon University; Pittsburgh, PA 15213, USA.}
\email{$\{$yuncheng, ttkocz$\}$@andrew.cmu.edu, $\{$ayx, yixuel$\}$@alumni.cmu.edu}
\thanks{TT's research supported in part by NSF grant DMS-1955175.}
\date{\today}
\begin{document}

\begin{abstract} 
Suppose that the edges of a complete graph are assigned weights independently at random and we ask for the weight of the minimal-weight spanning tree, or perfect matching, or Hamiltonian cycle. For these and several other common optimisation problems, we establish asymptotically tight bounds when the weights are independent copies of a symmetric random variable (satisfying a mild condition on tail probabilities), in particular when the weights are Gaussian.
\end{abstract}

\maketitle

\bigskip

\begin{footnotesize}
\noindent {\em 2010 Mathematics Subject Classification.} Primary 05C80; Secondary 90C27, 60C05.

\noindent {\em Key words. random weighted graph, assignment problem, minimum spanning tree, Gaussian distribution} 
\end{footnotesize}

\bigskip

\section{Introduction}

Classical optimisation problems such as the minimum spanning tree, the assignment problem or the shortest path have been extensively studied in the worst case as well as in the average case. For the latter, we usually consider a complete graph on $n$ vertices with each edge having assigned independently at random a nonnegative weight and ask for typical values (in terms of the expectation or high probability bounds) of the weight of the minimum-weight combinatorial structure such as a spanning tree, a perfect matching or a path between two fixed vertices. For example, in the case of the exponential rate $1$ weights, Frieze's $\zeta(3)$-result from \cite{F} says that the expected weight of the minimum spanning tree is asymptotic to $\zeta(3)$ as $n \to \infty$, for the assignment problem, resolving Parisi's conjecture from \cite{P},  Aldous in \cite{A} showed $\zeta(2)$ to be the asymptotic value (see also \cite{CS, LW, NPS, W}), whereas the weight of the shortest path is asymptotic to $\frac{\log n}{n}$, as showed independently by David and Prieditis in \cite{DP} and Janson in \cite{J}. Moreover, many limit theorems have also been established (for instance, see \cite{BH, FPS, J0}), large deviation regimes studied (for instance, see \cite{Fl, McD}) and various refinements, extensions and constrained versions have been investigated (for instance, see \cite{BLP, CFIJS, DF, FlFK, FPST, FPT, FS, FRT, FT1, FT3, FT4, FT2, K, LZ, LM, LT3}). 

In this short note, we consider the case of weights drawn from symmetric distributions. The assignment problem with Gaussian weights has been recently studied in {\red \cite{LT1, M} and with general weights in \cite{LT2}.} We identify a mild condition on tails, viz. Chernoff's bound being asymptotically optimal, notably satisfied in the Gaussian case. Under this condition, we are able to find asymptotically tight high probability estimates for the aforementioned optimisation problems (and several others).

\section{Results}

To motivate our main definition and state our results, we first recall necessary notions and facts. For a random variable $X$, let $\Lambda\colon \R \to (-\infty,+\infty]$,
\[
\Lambda(t) = \log\E e^{tX}, \qquad t \in \R,
\]
be its log-moment generating function with the Legendre transform (the rate function of $X$),
\[
\Lambda_*(t) = \sup_{s \in \R} \{st - \Lambda(s)\}, \qquad t \in \R.
\]
Recall that by Markov's inequality, for every $t$,
\[
\p{X > t} \leq \exp\Big(-\sup_{s > 0}\{st - \Lambda(s)\}\Big).
\]
If $\Lambda(t_0) < \infty$ for some $t_0 > 0$, then $\E X \in [-\infty,\infty)$ and for $t > \E X$, in fact we have $\sup_{s > 0}\{st - \Lambda(s)\} = \Lambda_*(t)$ (for instance, see Lemma 2.2.5 in \cite{DZ}). This is then sometimes referred to as Chernoff's inequality,
\begin{equation}\label{eq:cher}
\p{X > t} \leq \exp\left(-\Lambda_*(t)\right), \qquad t > \E X.
\end{equation}
We say that $X$ has \emph{regular upper tails} if this upper bound is asymptotically tight in the following sense:
\begin{equation}\label{eq:def-reg}
\p{X > t} = \exp\Big(-(1+o(1))\Lambda_*(t)\Big), \qquad \text{as } t \to +\infty.
\end{equation}
Note that when $X$ is bounded above, $\p{X > t} = 0$ and $\Lambda_*(t) = +\infty$ for all $t > \text{ess\! sup} X$, so in this case, condition \eqref{eq:def-reg} is vacuously satisfied and $X$ has regular upper tails. Examples with regular upper tails include Gaussian, exponential, gamma, Poisson random variables. On the other hand, it is not difficult to construct random variables without regular upper tails (see Section \ref{sec:conclusion}).

For the purposes of this note, we say that a random variable $X$ is \emph{good}, if it is symmetric (meaning $-X$ has the same distribution as $X$), $\Lambda(t) < \infty$ for all $|t| < \delta$ for some $\delta > 0$ and $X$ has regular upper tails. In particular, all symmetric bounded random variables are good. If $X$ is good and unbounded, then $\Lambda_* < \infty$ and $\Lambda_*$ is even, convex strictly increasing on $(0,+\infty)$ with $\Lambda_*(0) = 0$, so that the inverse $\Lambda_*^{-1}\colon [0,\infty) \to [0,\infty)$ of $\Lambda_*|_{[0,\infty)}$ is well-defined. {\red In general, for $t \geq 0$, we set $\Lambda_*^{-1}(t) = \inf\{s, \Lambda_*(s) \geq t\}$, the usual generalised inverse of $\Lambda_*$. For instance, in the degenerate case, when $X = 0$ a.s., $\Lambda \equiv 0$, and $\Lambda_*(0) = 0$ and $\Lambda_* = +\infty$ elsewhere, so $\Lambda_*^{-1} \equiv 0$.}

Examples of good random variables include of course standard Gaussian or two-sided exponential.

Our main results provide asymptotically tight high probability bounds on extremal-weight common combinatorial structures (perfect matchings, spanning trees, Hamilton cycles, paths between two fixed vertices, copies of a fixed graph) in complete graphs with edge-weights being i.i.d. copies of a good random variable. Thanks to symmetry, the distribution of the minimum is the same as of negative the maximum and thus we shall only focus on the latter.

\begin{theorem}\label{thm:assign}
Let $K_{n,n}  = ([n], [n], [n] \times [n])$ be the complete bipartite graph with each edge $e$ assigned an independent copy $X_e$ of a good random variable $X$ with rate function $\Lambda_*$. Let $\mathscr{C}_n$ be the set of perfect matchings in $K_{n,n}$ and let $W_n$ be the weight of an optimal matching,
\[
W_n = \max\left\{\sum_{e \in E(M)} X_e, \ M \in \mathscr{C}_n\right\}.
\]
Then,
\[
W_n = (1+o(1))n\Lambda_*^{-1}(\log n), \qquad \text{w.h.p.}\footnote{with high probability, that is with probability tending to $1$ as $n \to \infty$}
\]
\end{theorem}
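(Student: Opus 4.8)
The plan is to show that the two-sided estimate
\[
(1-\e)\,n\Lambda_*^{-1}(\log n) \;\le\; W_n \;\le\; (1+\e)\,n\Lambda_*^{-1}(\log n)
\]
holds with high probability for every fixed $\e\in(0,1)$, and then to let $\e\downarrow0$. One may assume $X$ is nondegenerate (if $X=0$ a.s.\ both sides vanish), and I would carry out the unbounded case in detail; when $X$ is bounded, $\Lambda_*^{-1}(\log n)\to b:=\operatorname{ess\,sup}X$ and both bounds follow from the same reasoning with the threshold used below replaced by a fixed constant just below $b$. I will use repeatedly that, since $\Lambda_*$ is convex and even with $\Lambda_*(0)=0$, one has $\Lambda_*(\mu s)\le\mu\Lambda_*(s)$ for $\mu\in[0,1]$ and $\Lambda_*(\lambda s)\ge\lambda\Lambda_*(s)$ for $\lambda\ge1$ (and $s\ge0$); equivalently, $\Lambda_*^{-1}((1-\e)u)\ge(1-\e)\Lambda_*^{-1}(u)$ and $\Lambda_*\big((1+\e)\Lambda_*^{-1}(u)\big)\ge(1+\e)u$ for all $u\ge0$.

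For the upper bound I would take a union bound over the $n!$ perfect matchings of $K_{n,n}$. For a fixed matching $M$, the sum $S=\sum_{e\in E(M)}X_e$ of $n$ i.i.d.\ copies of $X$ has $\Lambda_S=n\Lambda$, rate function $\Lambda_{S,*}(v)=n\Lambda_*(v/n)$, and mean $0$, so Chernoff's inequality \eqref{eq:cher} applied to $S$ gives $\p{S>v}\le\exp\big(-n\Lambda_*(v/n)\big)$ for $v>0$. Taking $v=(1+\e)\,n\Lambda_*^{-1}(\log n)$ and using $\Lambda_*\big((1+\e)\Lambda_*^{-1}(\log n)\big)\ge(1+\e)\log n$, the union bound yields
\[
\p{W_n>(1+\e)\,n\Lambda_*^{-1}(\log n)} \;\le\; n!\,\exp\big(-n(1+\e)\log n\big) \;\le\; \exp(-\e\,n\log n)\;\to\;0.
\]

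For the lower bound I would set $a_n=\Lambda_*^{-1}\big((1-\e)\log n\big)$, so $\Lambda_*(a_n)=(1-\e)\log n$ and $a_n\to\infty$. By regular upper tails, $p_n:=\p{X>a_n}=\exp\big(-(1+o(1))(1-\e)\log n\big)\ge n^{-1+\e/2}$ for all large $n$, hence $np_n\ge n^{\e/2}\to\infty$. The subgraph $H$ of $K_{n,n}$ formed by the edges $e$ with $X_e>a_n$ is, by independence of the weights, distributed as the random bipartite graph $G(n,n,p_n)$; since $np_n$ exceeds $\log n$ by a polynomial factor, $H$ has a perfect matching $M$ with high probability (well above the classical threshold $p=\tfrac{\log n}{n}$ for a perfect matching in $G(n,n,p)$). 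On that event, $W_n\ge\sum_{e\in E(M)}X_e> n\,a_n=n\,\Lambda_*^{-1}\big((1-\e)\log n\big)\ge(1-\e)\,n\,\Lambda_*^{-1}(\log n)$.

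The only step beyond routine large-deviation bookkeeping is the existence of a perfect matching in the graph of heavy edges, and that is where I would expect the (small amount of) real work to lie. It goes through comfortably because fixing $\e<1$ forces the heavy-edge probability $p_n$ to sit polynomially above the matching threshold $\tfrac{\log n}{n}$, so no sharp-threshold, second-moment, or Hall-condition argument is needed — any convenient sufficient criterion such as $np_n\gg\log n$ does the job.
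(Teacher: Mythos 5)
Your proof is correct and follows essentially the same route as the paper: a union bound with Chernoff's inequality \eqref{eq:cher} and the superadditivity of $\Lambda_*$ (monotonicity of $\Lambda_*(u)/u$) for the upper bound, and pruning to the heavy-edge bipartite graph combined with the Erd\H{o}s--R\'enyi perfect-matching threshold for $G_{n,n,p}$ for the lower bound. The only cosmetic difference is that you fix the cut level $a_n=\Lambda_*^{-1}((1-\e)\log n)$ and lower-bound the edge probability directly from the regular-tails condition, whereas the paper fixes $p_n=2\log n/n$, takes the quantile $x_n$, and converts it to $(1-o(1))\Lambda_*^{-1}(\log n)$ via Lemma \ref{lm:xn}.
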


Recall that the density of a graph $H=(V,E)$ is $d(H) = |E|/|V|$ and the graph $H$ is called \emph{balanced} if its density is not smaller than the density of any of its subgraphs, that is $\max d(H') = d(H)$ where the maximum is over all subgraphs $H'$ of $H$.

\begin{theorem}\label{thm:rest}
Let $K_{n}  = ([n], \binom{[n]}{2})$ be the complete (undirected) graph with each edge $e$ assigned an independent copy $X_e$ of a good random variable $X$ with rate function $\Lambda_*$. Given a set $\mathscr{C}_n$ of subgraphs of $K_n$, we let $W_n$ be the weight of an optimal one,
\[
W_n = \max\left\{\sum_{e \in E(H)} X_e, \ \text{$H \in \mathscr{C}_n$}\right\}.
\]
In each of the following cases
\begin{enumerate}[(a)]
\item $\mathscr{C}_n$ is the set of all spanning trees of $K_n$,

\item $\mathscr{C}_n$ is the set of all Hamilton cycles in $K_n$,

\item $\mathscr{C}_n$ is the set of all paths from vertex $1$ to $2$ in $K_n$,
\end{enumerate}
we have
\[
W_n = (1+o(1))n\Lambda_*^{-1}(\log n), \qquad \text{w.h.p.}
\]

\begin{enumerate}[(a)]

\item[(d)] If $\mathscr{C}_n$ is the set of all copies in $K_n$ of a fixed balanced graph $H_0$ with $\ell$ edges and density $d$, then
\[
W_n = (1+o(1))\ell\Lambda_*^{-1}\left(d^{-1}\log n\right), \qquad \text{w.h.p.}
\]
\end{enumerate}
\end{theorem}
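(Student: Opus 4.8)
The plan is to prove, for every fixed $\e>0$, the matching bounds $W_n\le(1+\e)a_n$ and $W_n\ge(1-\e)a_n$ each with probability $\to1$, where $a_n=n\Lambda_*^{-1}(\log n)$ in cases (a)--(c) and $a_n=\ell\Lambda_*^{-1}(d^{-1}\log n)$ in case (d); letting $\e\downarrow0$ then gives the theorem. Two elementary facts about the rate function will be used throughout: being convex with $\Lambda_*(0)=0$, it satisfies $\Lambda_*(\mu x)\le\mu\Lambda_*(x)$ for $\mu\in[0,1]$ and $\Lambda_*(\lambda x)\ge\lambda\Lambda_*(x)$ for $\lambda\ge1$; and, $\Lambda$ being finite near $0$, $\Lambda_*(s)\to\infty$, hence $\Lambda_*^{-1}(\log n)\to\infty$ (so the "$n-1$ versus $n$ edges" discrepancy is immaterial). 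I will treat unbounded $X$; if $X$ is bounded, with $b=\mathrm{ess\,sup}\,X$, then $\Lambda_*^{-1}(\log n)\to b$ and both bounds are easy --- the upper one since any $m$-edge structure has weight at most $mb$, the lower one since the edges of weight exceeding a fixed $b'<b$ already form a supercritical $G(n,p)$ (with $p=\p{X>b'}>0$ constant), as in the argument below.

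\emph{Upper bound.} An $m$-edge structure has weight equal to a sum $S_m$ of $m$ i.i.d.\ copies of $X$, so \eqref{eq:cher} (valid since $\E X=0$) gives $\p{S_m>mt}\le e^{-m\Lambda_*(t)}$ for $t>0$. In cases (a)--(c) there are $e^{O(n\log n)}$ structures ($n^{n-2}$ spanning trees; at most $n!$ Hamilton cycles; at most $n^{k-1}$ paths from $1$ to $2$ of each length $k\le n-1$). Taking $t=(1+\e)\Lambda_*^{-1}(\log n)$, so that $\Lambda_*(t)\ge(1+\e)\log n$ by the convexity bound, a union bound yields $\p{W_n>(1+\e)a_n}\to0$; for (c) one sums over $k$ separately, noting that a length-$k$ path needs per-edge threshold $\tfrac{n}{k}(1+\e)\Lambda_*^{-1}(\log n)\ge(1+\e)\Lambda_*^{-1}(\log n)$ and that $\sum_k n^{-1-k\e}=o(1)$. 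In (d) there are at most $n^{|V(H_0)|}=n^{\ell/d}$ copies, each with $\ell$ edges; $t=(1+\e)\Lambda_*^{-1}(d^{-1}\log n)$ gives $\ell\Lambda_*(t)\ge(1+\e)(\ell/d)\log n$, and the union bound closes again.

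\emph{Lower bound.} Set $s=(1-\e)\Lambda_*^{-1}(\log n)$ in cases (a)--(c), resp.\ $s=(1-\e)\Lambda_*^{-1}(d^{-1}\log n)$ in (d); then $s\to\infty$. The subgraph $G_s\subseteq K_n$ consisting of the edges $e$ with $X_e>s$ is distributed as $G(n,p)$ with $p=\p{X>s}$, and by regular upper tails and the convexity bound,
\[
p=\exp\!\big(-(1+o(1))\Lambda_*(s)\big)\ge\exp\!\big(-(1+o(1))(1-\e)\log n\big)\ge n^{-1+\e/2}
\]
for large $n$ in cases (a)--(c), resp.\ $p\ge n^{-1/d+\e/(2d)}$ in (d). In (a)--(c), $p$ lies polynomially above the connectivity/Hamiltonicity threshold $\tfrac{\log n}{n}$, so w.h.p.\ $G_s$ is connected, contains a Hamilton cycle, and contains a path of length $n-o(n)$ between $1$ and $2$; every edge used has weight $>s$, so $W_n>(1-o(1))(1-\e)a_n$. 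In (d), balancedness gives $m(H_0)=d$, so $p\gg n^{-1/m(H_0)}$, the threshold for containing a copy of $H_0$; a second moment computation then shows $G_s$ contains such a copy w.h.p., and its $\ell$ edges of weight $>s$ give $W_n>(1-\e)a_n$.

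The step I expect to be the main obstacle is the lower bound in case (c): connectivity of $G_s$ alone produces only \emph{some} path from $1$ to $2$, which may be too short to carry weight of order $n\Lambda_*^{-1}(\log n)$, so what is really needed is a near-spanning path between two prescribed vertices --- supplied by Hamilton-connectedness of $G(n,p)$ well above the threshold, or by a direct P\'osa-rotation argument inside $G_s$. A secondary delicate point is case (d), where the balanced hypothesis is exactly what identifies the appearance threshold as $n^{-1/d}$ and lets the second moment method succeed; and the bounded case must be separated off at the outset, since there "regular upper tails" gives no quantitative control on $\p{X>s}$ for $s<\mathrm{ess\,sup}\,X$. (Theorem \ref{thm:assign} is handled identically, using $K_{n,n}$, its $n!$ perfect matchings, and the perfect-matching threshold for bipartite random graphs.)
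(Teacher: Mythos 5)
Your proposal is correct and follows essentially the same route as the paper: a union bound with Chernoff's inequality and the superadditivity of $\Lambda_*$ for the upper bounds (summing over path lengths in case (c)), and for the lower bounds a thresholded subgraph distributed as $G_{n,p}$ lying above the connectivity, Hamiltonicity, Hamilton-connectedness and balanced-subgraph appearance thresholds — including the key observation that case (c) needs Hamilton-connectedness rather than mere connectivity, exactly as in the paper. The only (harmless) difference is organizational: you fix the weight level $s=(1-\e)\Lambda_*^{-1}(\cdot)$ and lower-bound $\p{X>s}$ via regular tails and convexity, treating bounded $X$ separately, whereas the paper fixes the probability level $p_n$ and converts it to a weight level through its Lemma \ref{lm:xn}.
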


{\red In the vast body of works mentioned in passing in the introduction, where the weights are nonnegative, it is natural to think of them as cost and those optimisation problems provide the size of the cheapest structure. Here, when the weights have a symmetric distribution, the paradigm is different and we may think of the random weight $X_e$ assigned to an edge $e$ as a gain from the edge (when positive) or, a loss (when negative, occurring equally likely, by symmetry). It then seems natural to ask for the maximal possible total gain over all structures, so in particular, $W_n$ above is the highest possible gain over all assignments (matchings), spanning trees, etc.}

{\red In the case of the optimal matching (the assignment problem), in their independent work \cite{LT2}, Lifshits and Tadevosian have recently obtained the asymptotics of $\E W_n$ for general i.i.d. weights whose quantile function tends to infinity and slowly varies at zero.}

In the important case of Gaussian weights, $\Lambda_*$ is explicit (quadratic). Moreover, the concentration for the supremum of a Gaussian process allows to obtain asymptotic values of the expectation as well. In the case of the optimal matching, the asymptotics of $\E W_n$ {\red was recently found by Mordant and Segers (see Theorem 2.3 in \cite{M}) and, independently, by Lifshits and Tadevosian (see Theorem 1 in \cite{LT1}), however with a different argument for the lower bound (via a greedy construction, whilst we employ pruning).}

\begin{corollary}\label{cor:gauss}
If in Theorem \ref{thm:assign} or \ref{thm:rest} we let the distribution of $X$ be standard Gaussian (mean $0$, variance $1$), then in Theorem \ref{thm:assign} as well as Theorem \ref{thm:rest} (a), (b), (c), we have
\[
W_n = (1+o(1))n\sqrt{2\log n}, \quad \text{w.h.p.} \quad \text{and} \quad \E W_n =(1+o(1))n\sqrt{2\log n},
\]
whilst in Theorem \ref{thm:rest} (d),
\[
W_n = (1+o(1))\ell\sqrt{2d^{-1}\log n}, \quad \text{w.h.p.} \quad \text{and} \quad \E W_n =(1+o(1))\ell\sqrt{2d^{-1}\log n}.
\]
\end{corollary}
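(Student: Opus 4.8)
The plan is to reduce the w.h.p. part to a one-line computation of the rate function, and then obtain the asymptotics of $\E W_n$ via standard Gaussian comparison and concentration inequalities. For $X$ standard Gaussian, $\Lambda(s) = \log\E e^{sX} = s^2/2$ for all $s \in \R$, hence $\Lambda_*(t) = \sup_s\{st - s^2/2\} = t^2/2$; in particular $X$ is good (it is symmetric, $\Lambda$ is finite everywhere, and the classical tail asymptotics $\p{X>t} \sim (t\sqrt{2\pi})^{-1}e^{-t^2/2}$ give $\p{X>t} = e^{-(1+o(1))t^2/2}$, i.e. regular upper tails), and $\Lambda_*^{-1}(u) = \sqrt{2u}$. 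Substituting $\Lambda_*^{-1}(\log n) = \sqrt{2\log n}$ into Theorems \ref{thm:assign} and \ref{thm:rest}(a)--(c), and $\Lambda_*^{-1}(d^{-1}\log n) = \sqrt{2d^{-1}\log n}$ into Theorem \ref{thm:rest}(d), immediately yields the claimed w.h.p. statements, so only the matching bounds for $\E W_n$ remain.

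Write $a_n$ for the target value ($a_n = n\sqrt{2\log n}$ in the first case, $a_n = \ell\sqrt{2d^{-1}\log n}$ in case (d); note $a_n \to \infty$), and note that $W_n = \max_{H \in \mathscr{C}_n} X_H$ where $X_H = \sum_{e \in E(H)} X_e$ is a centred Gaussian with $\Var X_H = |E(H)|$. Put $\sigma_n^2 = \max_{H \in \mathscr{C}_n}|E(H)|$, so $\sigma_n^2 \le n$ for matchings, spanning trees, Hamilton cycles and $1$--$2$ paths, while $\sigma_n^2 = \ell$ in case (d). For the upper bound I would use the elementary estimate $\E\max_{i\le N} Y_i \le \sigma\sqrt{2\log N}$, valid for centred Gaussians $Y_i$ with $\Var Y_i \le \sigma^2$ (Jensen applied to $\lambda \mapsto e^{\lambda\max_i Y_i}$), together with the crude counting bounds $|\mathscr{C}_n| \le n!$ for matchings, Hamilton cycles and $1$--$2$ paths, $|\mathscr{C}_n| \le n^{n-2}$ for spanning trees, and $|\mathscr{C}_n| \le n^{v}$ with $v = |V(H_0)| = \ell/d$ for case (d). In every instance $\log|\mathscr{C}_n| \le (1+o(1))\,v\log n$ with $v = n$ in the first case and $v = \ell/d$ in case (d), whence $\E W_n \le \sigma_n\sqrt{2\log|\mathscr{C}_n|} \le (1+o(1))a_n$.

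For the lower bound I would combine the already-proved high-probability estimate with Gaussian concentration. Since $W_n \le \sum_{e\in E(K_n)}|X_e|$ has finite mean, the Gaussian concentration inequality (Borell--TIS) applies to the process $(X_H)_{H\in\mathscr{C}_n}$ and gives $\p{|W_n - \E W_n| > u} \le 2e^{-u^2/(2\sigma_n^2)}$ for all $u>0$, hence also $|\E W_n - \mathrm{med}(W_n)| = O(\sigma_n)$. By the w.h.p. part just established, $\mathrm{med}(W_n) = (1+o(1))a_n$, while $\sigma_n = O(\sqrt n) = o(a_n)$ (respectively $\sigma_n = \sqrt\ell = o(a_n)$ in case (d), as $a_n \to \infty$), so $\E W_n = (1+o(1))a_n$. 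Alternatively, one can avoid concentration entirely: fix any $H_0 \in \mathscr{C}_n$ and use $W_n \ge X_{H_0}$, so that $\E W_n \ge (1-\e)a_n\,\p{W_n \ge (1-\e)a_n} + \E\!\big[X_{H_0}\1_{W_n < (1-\e)a_n}\big] \ge (1-\e)(1-o(1))a_n - (\E X_{H_0}^2)^{1/2}\,\p{W_n < (1-\e)a_n}^{1/2}$, where the subtracted term is $o(\sqrt n) = o(a_n)$ (resp.\ $\sqrt\ell\cdot o(1) = o(a_n)$); letting $\e \to 0$ finishes.

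I do not expect a genuine obstacle here, as this is essentially a specialisation of the main theorems. The only point needing a little care is that $W_n$ is not bounded below (all structures could carry negative total weight), which is exactly what the concentration bound — or the Cauchy--Schwarz step above — absorbs; and in case (d) one should keep in mind that $\ell$ and $d$ are fixed constants, so all additive $O(1)$ and $O(\sqrt\ell)$ errors are negligible against $a_n \to \infty$.
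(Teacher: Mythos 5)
Your proposal is correct and follows essentially the same route as the paper: specialise $\Lambda_*(t)=t^2/2$, so $\Lambda_*^{-1}(u)=\sqrt{2u}$, to get the w.h.p.\ statements from Theorems \ref{thm:assign} and \ref{thm:rest}, and then transfer them to $\E W_n$ via Gaussian concentration of the maximum with $\sigma_n^2=\max_{H\in\mathscr{C}_n}|E(H)|$, which is negligible compared with the target value. Note that the paper's single concentration estimate $\p{|W_n-\E W_n|\geq t}\leq 2e^{-t^2/(2l)}$ already gives both directions for $\E W_n$ at once, so your separate entropy upper bound $\sigma_n\sqrt{2\log|\mathscr{C}_n|}$ (essentially the paper's bound \eqref{eq:EWn}) and the Cauchy--Schwarz lower-bound alternative are valid but redundant.
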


\section{Proofs}

\subsection{Overview.}
Our upper bounds are based on the union bound. They turn out to be tight and are matched by lower bounds, obtained by means of constructions exploiting threshold probabilities for binomial random graphs.

\subsection{Regular tails.}
We shall need the following simple lemma which establishes asymptotic behaviour of certain sequences showing up in the proofs of our main results. The lemma is based on the tail regularity of good random variables.
\begin{lemma}\label{lm:xn}
Let $\alpha > 0$. Let $\omega_n$ be a positive sequence such that $\omega_n \to \infty$ with $\omega_n = n^{o(1)}$. Let $X$ be a good random variable and define $x_n = \inf\{t > 0, \ \p{X>t} \leq \omega_nn^{-\alpha}\}$. Then, {\red for all $n$ large enough, we have}
\[
(1-o(1))\Lambda_*^{-1}(\alpha\log n) \leq x_n \leq \Lambda_*^{-1}(\alpha \log n).
\]
\end{lemma}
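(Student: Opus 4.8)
The plan is to write $s_n := \Lambda_*^{-1}(\alpha\log n)$ and prove the two inequalities separately: the upper bound comes from Chernoff's inequality \eqref{eq:cher}, and the lower bound from the regular-tails property \eqref{eq:def-reg}, with convexity of $\Lambda_*$ doing the essential work. For the upper bound $x_n\le s_n$, note first that since $X$ is symmetric with $\Lambda$ finite near $0$ we have $\E X=0$, so \eqref{eq:cher} applies for every $t>0$. For $n\ge2$ we have $\alpha\log n>0$, hence $s_n>0$, and by the definition of the generalised inverse together with the monotonicity of $\Lambda_*$ on $[0,\infty)$, every $t>s_n$ satisfies $\Lambda_*(t)\ge\alpha\log n$; therefore $\p{X>t}\le e^{-\Lambda_*(t)}\le n^{-\alpha}\le\omega_n n^{-\alpha}$ once $\omega_n\ge1$, which shows $(s_n,\infty)\subseteq\{t>0:\p{X>t}\le\omega_n n^{-\alpha}\}$ and so $x_n\le s_n$. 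This step uses no regularity and is valid whether or not $X$ is bounded.

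For the lower bound it suffices to show that for each fixed $\e\in(0,1)$ we have $\p{X>(1-\e)s_n}>\omega_n n^{-\alpha}$ for all large $n$: indeed $(1-\e)s_n$ then lies below the infimum defining $x_n$, so $x_n\ge(1-\e)s_n$, and letting $\e\downarrow0$ gives $x_n\ge(1-o(1))s_n$. Suppose first that $X$ is unbounded, so $\Lambda_*\colon[0,\infty)\to[0,\infty)$ is a continuous strictly increasing bijection; then $\Lambda_*(s_n)=\alpha\log n$ and $s_n\to\infty$, hence $(1-\e)s_n\to\infty$ and \eqref{eq:def-reg} yields $\p{X>(1-\e)s_n}\ge\exp\big(-(1+\eta_n)\Lambda_*((1-\e)s_n)\big)$ for some $\eta_n\to0$. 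Since $\Lambda_*$ is convex with $\Lambda_*(0)=0$, we have $\Lambda_*((1-\e)s_n)\le(1-\e)\Lambda_*(s_n)=(1-\e)\alpha\log n$, so $\p{X>(1-\e)s_n}\ge n^{-(1+\eta_n)(1-\e)\alpha}$. This exceeds $\omega_n n^{-\alpha}$ precisely when $n^{\alpha(1-(1+\eta_n)(1-\e))}>\omega_n$, which holds for all large $n$ since the exponent tends to $\alpha\e>0$ while $\omega_n=n^{o(1)}$.

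It remains to treat bounded $X$; let $M$ be its essential supremum. If $M=0$ then $X=0$ a.s., $x_n=0=s_n$, and there is nothing to prove, so assume $M>0$. Here \eqref{eq:def-reg} is vacuous, but a direct check shows $s_n\le M$ and $s_n\to M$ (for $n$ large $\alpha\log n$ exceeds $\sup_{[0,M']}\Lambda_*$ for each fixed $M'<M$, while $\Lambda_*\equiv+\infty$ on $(M,\infty)$). Hence for fixed $\e\in(0,1)$ and $n$ large, $\p{X>(1-\e)s_n}\ge\p{X>(1-\e)M}$, a positive constant, which exceeds $\omega_n n^{-\alpha}\to0$; this gives $x_n\ge(1-\e)s_n$ exactly as before.

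The step I expect to be the main obstacle is the lower bound: the regularity hypothesis \eqref{eq:def-reg} controls $\p{X>t}$ only as $t\to\infty$ and only up to a $(1+o(1))$ factor in the exponent, and on top of that one must beat the sub-polynomial factor $\omega_n$. The device that resolves everything at once is to evaluate the tail at $(1-\e)s_n$ rather than at $s_n$: convexity of $\Lambda_*$ (with $\Lambda_*(0)=0$) then yields a genuinely multiplicative saving $1-\e$ in the exponent, opening a gap of order $\alpha\e\log n$ that comfortably absorbs both $\log\omega_n=o(\log n)$ and the error $\eta_n\Lambda_*(s_n)=o(\log n)$; one then sends $\e\downarrow0$.
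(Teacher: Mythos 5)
Your proof is correct and follows essentially the same route as the paper: Chernoff's inequality \eqref{eq:cher} for the upper bound, and the regularity condition \eqref{eq:def-reg} combined with convexity of $\Lambda_*$ (equivalently, monotonicity of $\Lambda_*(u)/u$) for the lower bound, with the same split into the degenerate, bounded and unbounded cases. The only cosmetic difference is that you apply \eqref{eq:def-reg} at the deterministic points $(1-\e)\Lambda_*^{-1}(\alpha\log n)$ and let $\e\downarrow 0$, whereas the paper applies it at $x_n$ itself and then inverts $\Lambda_*$.
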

\begin{proof}
{\red Excluding the degenerate situation, when $X = 0$ a.s. (in which case $x_n = 0$ and $\Lambda_*^{-1} \equiv 0$, so the lemma holds trivially), we can assume without loss of generality that $x_n > 0$ for all $n$ (since $\omega_nn^{-\alpha} \to 0$).  As $\omega_n \to \infty$, let us also assume that $\omega_n \geq 1$ for all $n$.}

By the definition of $x_n$, for every $\theta \in (0,1)$, we have $\p{X>\theta x_n} \geq \omega_nn^{-\alpha} \geq n^{-\alpha}$. On the other hand, by Chernoff's inequality \eqref{eq:cher}, $\p{X > \theta x_n} \leq e^{-\Lambda_*(\theta x_n)}$. Combining these two bounds yields $\Lambda_*(\theta x_n) \leq \alpha \log n$, so $x_n \leq \theta^{-1}\Lambda_*^{-1}(\alpha \log n)$, which proves the upper bound on $x_n$.

For the lower bound, if $X$ is bounded, say $A = \text{ess\! sup} |X|$, $0 < A < \infty$, then $x_n \to A$ as $n \to \infty$, as well as $\Lambda_*^{-1}(\alpha\log n) \to A$ as $n \to \infty$ (because $\Lambda_*(t) = +\infty$ for every $t > A$). Suppose now that $X$ is not bounded. Then $x_n \to \infty$ as $n \to \infty$. By assumption \eqref{eq:def-reg}, there is a positive sequence $\varepsilon_n \to 0$ such that $\p{X > x_n} = e^{-(1+\varepsilon_n)\Lambda_*(x_n)}$. By the definition of $x_n$, $\p{X > x_n} \leq \omega_nn^{-\alpha}$, thus $(1+\varepsilon_n)\Lambda_*(x_n) \geq -\log \omega_n + \alpha\log n$, so $\Lambda_*(x_n) \geq \theta_n\alpha\log n$ with $\theta_n = (1+\varepsilon_n)^{-1}(1-\frac{\log\omega_n}{\alpha\log n})$. By the assumptions on $\omega_n$, we have $\theta_n < 1$ with $\theta_n \to 1$, consequently $\theta_n\alpha\log n \geq \Lambda_*(\theta_n\Lambda_*^{-1}(\alpha\log n))$ (the convexity of $\Lambda_*$ and $\Lambda_*(0) = 0$ imply {\red that $u \mapsto \Lambda_*(u)/u$} is nondecreasing, which we shall also use several times in the sequel). We thus get $x_n \geq \theta_n\Lambda_*^{-1}(\alpha\log n) = (1-o(1))\Lambda_*^{-1}(\alpha\log n)$, as desired.
\end{proof}

\begin{remark}\label{rem:xn}
Thanks to Lemma \ref{lm:xn}, the asymptotic values of $W_n$ from Theorems \ref{thm:assign} and \ref{thm:rest} can be equivalently stated in terms of the sequences $x_n$, which may be easier to compute than $\Lambda_*^{-1}(\log n)$ for a given distribution of $X$.
\end{remark}

\subsection{Proof of Theorem \ref{thm:assign}}
We begin with a high probability upper bound on $W_n$.
Note that thanks to Chernoff's bound \eqref{eq:cher}, if $X_1, \dots, X_k$ are i.i.d. copies of $X$, then for every $t > 0$, 
\begin{equation}\label{eq:cher-sum}
\p{X_1+\dots+X_k > kt} \leq \exp\left\{-k\Lambda_*(t)\right\}
\end{equation}
(by independence, the rate function of $X_1+\dots+X_k$ at $kt$ is $k\Lambda_*(t)$).
Using first a union bound and then this, for $\delta_n \geq 0$, we obtain
\begin{align}\notag
\p{W_n > (1+\delta_n)n\Lambda_*^{-1}(\log n)} &\leq \sum_{M \in \mathscr{C}_n} \p{\sum_{e \in E(M)} X_e > (1+\delta_n)n\Lambda_*^{-1}(\log n)} \\\notag
&\leq \exp\Big\{\log|\mathscr{C}_n| - n\Lambda_*\left((1+\delta_n)\Lambda_*^{-1}(\log n)\right)\Big\} \\\label{eq:up-bd}
&\leq \exp\Big\{\log|\mathscr{C}_n|-(1+\delta_n)n\log n\Big\},
\end{align}
where in the last inequality we use the monotonicity of $\Lambda_*(u)/u$. Since $|\mathscr{C}_n| = n! \leq \frac{n^{n+1}}{e^n}$, for every $n \geq 7$, choosing $\delta_n = n^{-1}$ yields
\[
W_n \leq (1+n^{-1})n\Lambda_*^{-1}(\log n) \quad \text{with probability at least $1-e^{-n}$}.
\]

To establish a matching lower bound, we construct a random subgraph comprising only \emph{large} weights, which contains a perfect matching w.h.p. We set
\begin{equation}\label{eq:xn}
p_n = \frac{2\log n}{n} \qquad \text{and} \qquad x_n = \inf\{t > 0, \ \p{X>t} \leq p_n\}.
\end{equation}
{\red Excluding again the trivial case of $X = 0$ a.s., we have $x_n > 0$ (eventually), and then} for every $0 < \delta_n < 1$, we have $p_n' = \p{X > (1-\delta_n)x_n} > p_n$. Since the weights $X_e$ are i.i.d., the random bipartite graph $([n],[n],\{e \in [n]\times [n], \ X_e > (1-\delta_n)x_n\})$ is in fact $G_{n,n,p_n'}$, so by the classical result of Erd\"os and Renyi (see \cite{ER} or Theorem 6.1 in \cite{FK}), w.h.p. it contains a perfect matching which gives
\[
W_n \geq n(1-\delta_n)x_n, \qquad \text{w.h.p.}
\]
We choose $\delta_n$ arbitrarily with $\delta_n = o(1)$ as $n \to \infty$ and it remains to show that 
\[
x_n \geq (1-o(1))\Lambda_*^{-1}(\log n).
\]
This follows from Lemma \ref{lm:xn} applied to $\alpha = 1$ and $\omega_n = 2\log n$.\hfill$\square$

\subsection{Proof of Theorem \ref{thm:rest}}
We follow exactly the same strategy as in the proof of Theorem \ref{thm:assign}. 

\emph{Case (a), (b), (c): the upper bound.} In case (a) and (b) respectively, every graph $G \in \mathscr{C}_n$ has the same number of edges, $n-1$ and $n$ respectively. Repeating verbatim the derivation of \eqref{eq:up-bd} yields in each case
\[
\p{W_n > (1+\delta_n)n\Lambda_*^{-1}(\log n)} \leq \exp\Big\{\log|\mathscr{C}_n|-(1+\delta_n)n\log n\Big\}.
\]
The same holds in case (c) because letting $\ell(P)$ be the number of edges on a path $P$, using \eqref{eq:cher-sum}, we have
\begin{align*}
\p{W_n > (1+\delta_n)n\Lambda_*^{-1}(\log n)} &\leq \sum_{P \in \mathscr{C}_n} \p{\sum_{e \in E(P)} X_e > (1+\delta_n)n\Lambda_*^{-1}(\log n)} \\
&\leq \sum_{P \in \mathscr{C}_n}\exp\left\{- \ell(P)\Lambda_*\left((1+\delta_n)\frac{n}{\ell(P)}\Lambda_*^{-1}(\log n)\right)\right\} \\\label{eq:up-bd}
&\leq \sum_{P \in \mathscr{C}_n}\exp\Big\{- (1+\delta_n)n\log n\Big\},
\end{align*}
where in the last inequality we use the convexity of $\Lambda_*$ and $(1+\delta_n)\frac{n}{\ell(P)} \geq 1$. In case (a), (b), (c), we have $|\mathscr{C}_n| = n^{n-2}$, $|\mathscr{C}_n| = \frac12(n-1)!$, $|\mathscr{C}_n| = \sum_{l=0}^{n-2}\binom{n-2}{l}l! = (n-2)!\sum_{l=0}^{n-2}\frac{1}{(n-2-l)!} \leq e(n-2)!$, respectively, so in each case, setting $\delta_n = 0$ suffices to get $\log|\mathscr{C}_n|-(1+\delta_n)n\log n \to -\infty$ and as a result,
\[
W_n \leq n\Lambda_*^{-1}(\log n), \qquad \text{w.h.p.}
\]

\emph{Case (a), (b), (c): the lower bound.} We define $p_n, p_n'$ and $x_n$ as in the proof of Theorem~\ref{thm:assign}, see \eqref{eq:xn}. Considering the random graph $\mathcal{G} = ([n], \{e \in \binom{[n]}{2}, \ X_e > (1-\delta_n)x_n\})$ which is distributed as $G_{n,p_n'}$, we get in each case that $W_n \geq {\red (n-1)}(1-\delta_n)x_n$ w.h.p. This is because $p_n' > \frac{2\log n}{n}$ guarantees that $\mathcal{G}$ is w.h.p. (a) connected, (b) Hamiltonian, (c) Hamiltonian-connected (in particular $\mathcal{G}$ has a path of length $n$ between vertices $1$ and $2$), see \cite{ER0}, \cite{KSz}, \cite{BFF} respectively, or Theorems 4.1, 6.5 and Exercise 6.7.11 in \cite{FK}. Then Lemma \ref{lm:xn} finishes the argument.

\emph{Case (d): the upper bound.} Let $v$ and $\ell$ denote the number of vertices and edges in $H_0$, respectively. The density of $H_0$ is $d = \frac{\ell}{v}$. For the cardinality $|\mathscr{C}_n|$, that is the number of copies of $H_0$ in $K_n$, we have $|\mathscr{C}_n| \leq \binom{n}{v}v! \leq n^v$. As in \eqref{eq:up-bd}, we obtain
\begin{align*}
\p{W_n > (1+\delta_n)\ell\Lambda_*^{-1}(d^{-1}\log n)} &\leq \exp\left\{\log|\mathscr{C}_n|- \ell\Lambda_*\left((1+\delta_n)\Lambda_*^{-1}(d^{-1}\log n)\right)\right\} \\
&\leq \exp\left\{v\log n - (1+\delta_n)\ell d^{-1}\log n\right\} \\
&=  \exp\left\{-\delta_n v\log n\right\}.
\end{align*}
Choosing, say $\delta_n = (\log n)^{-1/2}$, we conclude
\[
W_n \leq (1+\delta_n)\ell\Lambda_*^{-1}(d^{-1}\log n), \qquad \text{w.h.p.}
\]

\emph{Case (d): the lower bound.} It is a classical result of Erd\"os and R\'enyi from \cite{ER1} (see also Theorem 5.3 in \cite{FK}) that for a balanced graph $H_0$, when $pn^{1/d} \to \infty$, the random graph $G_{n,p}$ contains a copy of $H_0$ w.h.p. Therefore, letting $p_n = \omega_nn^{-1/d}$ with $\omega_n \to \infty$, defining $x_n$ as in \eqref{eq:xn} and considering the random graph $\mathcal{G}$, we obtain as in cases (a), (b), (c), $W_n \geq \ell(1-\delta_n)x_n$ w.h.p. It remains to show that $x_n \geq (1-o(1))\Lambda_*^{-1}(d^{-1}\log n)$. This follows from Lemma \ref{lm:xn} applied to $\alpha = d^{-1}$ as long as we choose $\omega_n \to \infty$ with $\omega_n = n^{o(1)}$. \hfill$\square$

\begin{remark}
Using the common approach based on moment generating functions (for example, see the proof of (A.3) in Appendix A.2 in \cite{Ch}), we can also obtain the following upper bound on the expectation in Theorems \ref{thm:assign} and \ref{thm:rest},
\begin{equation}\label{eq:EWn}
\E W_n \leq l \Lambda_*^{-1}(l^{-1}\log |\mathscr{C}_n|),
\end{equation}
where $l = \max_{H \in \mathscr{C}_n} |E(H)|$ is the maximal number of edges in the graphs from a given class $\mathscr{C}_n$ (so $l = n, n-1, n, n, \ell$ in Theorems \ref{thm:assign} and \ref{thm:rest} (a), (b), (c), (d), respectively). Moreover, in each of the cases, the right hand side is asymptotic to the high probability bound on $W_n$ from Theorems \ref{thm:assign} and \ref{thm:rest}. A proof of \eqref{eq:EWn} can be sketched as follows: using $\max x_k \leq \frac{1}{t}\log \sum_{k} e^{t x_k}$ valid for all $t > 0$ and $x_k \in \R$, the concavity of the $\log$ function and independence, we get
\[
\E W_n \leq \frac{1}{t}\log\sum_{H \in \mathscr{C}_n} \E e^{t\sum_{e \in E(H)} X_e}  = \frac{1}{t}\log\sum_{H \in \mathscr{C}_n} e^{|E(H)|\Lambda(t)} \leq \frac{\log|\mathscr{C}_n| + l\Lambda(t)}{t}.
\]
Taking the infimum over $t > 0$ finishes the argument.
\end{remark}

\subsection{Proof of Corollary \ref{cor:gauss}}
When $X$ is standard Gaussian, $\E e^{tX} = e^{t^2/2}$, so $\Lambda_*(t) = \frac{t^2}{2}$ and $\Lambda_*^{-1}(t) = \sqrt{2t}$. If we let $l = \max_{H \in \mathscr{C}_n} |E(H)|$, then the variance of each Gaussian that $W_n$ takes the maximum over is bounded by $l$, $\Var(\sum_{e \in E(H)} X_e) = |E(H)| \leq l$. From the concentration of the maximum of a Gaussian process  around its expectation (see for instance Theorem 7.1 in \cite{L}), we get
\begin{equation}\label{eq:conc}
\p{|W_n - \E W_n| \geq t} \leq 2e^{-t^2/(2l)}, \qquad t \geq 0.
\end{equation}
In the case of Theorem \ref{thm:assign}, $l = n$, so taking, say $t = n$, we get from the above that $|W_n - \E W_n| < n$ w.h.p. Combining this with Theorem \ref{thm:assign} which gives $W_n = (1+o(1))n\sqrt{2\log n}$ w.h.p., we obtain $\E W_n = (1+o(1))n\sqrt{2\log n}$, as desired. We proceed analogously in the case of Theorem \ref{thm:rest} (and omit the details).\hfill$\square$

\section{Final remarks}\label{sec:conclusion}

\subsection{Conclusion}
We have determined asymptotically typical values of the weight of the minimal-weight common combinatorial structures in complete graphs with independent identically distributed symmetric weights having regular tails, that is satisfying \eqref{eq:def-reg}. 
A natural next step would be to establish limit theorems and the order of fluctuations.

{\red 
\subsection{Fluctuations}
Even in the case of Gaussian weights, where many tools exist (e.g. \cite{Ch, Ch2, DEZ}), finding the asymptotic value of the variance seems interesting and challenging. In particular, for optimal matchings, i.e. in the setting of Theorem \ref{thm:assign}, it follows from \eqref{eq:conc} that $\Var(W_n) \leq 4n$. On the other hand, Mordant and Segers argued in \cite{M} that $\Var(W_n) \geq 1$ (exploiting symmetries of the covariance structure). They also noted that by a general phenomenon in superconcentration (Theorem 8.1 of Chatterjee from \cite{Ch}) and the asymptotics of the mean $\E W_n$, the upper bound in fact improves and we have $\Var(W_n) = o(n)$, however the exact order of fluctuations $\Var(W_n)$ seems elusive.
}

\subsection{Refinements}
It is instructive to see the shortcoming of our main result when applied to nonnegative weights.  Suppose we consider the minimum spanning tree problem on the complete graph with independent weights $Y_e$, each uniformly distributed on $[0,2]$. Since $X_e = Y_e - 1$ is uniform on $[-1,1]$, applying Theorem \ref{thm:rest} (and Remark \ref{rem:xn}) with $x_n = \inf\{t > 0, \ \p{X > t} \leq \frac{2\log n}{n}\} = 1 - \frac{4\log n}{n}$, w.h.p., we have
\[
-(1+o(1))n\left(1-\frac{4\log n}{n}\right) = W_n = \min_{T} \sum_{e \in E(T)} X_e = \left(\min_{T} \sum_{e \in E(T)} Y_e\right) - (n-1).
\]
Without knowing the implicit $o(1)$ term, we cannot infer the asympototic behaviour of $\min_{T} \sum_{e \in E(T)} Y_e$ (which by Frieze's result from \cite{F}, tends to $2\zeta(3)$ in probability). It would be of interest to remedy this and refine the $o(1)$ term for (general) symmetric distributions.

\subsection{Measures without regular tails}
Recall that a random variable $X$ is said to have regular upper tails if $\lim_{t\to\infty} \frac{-\log \p{X>t}}{\Lambda_*(t)} = 1$, {\red see \eqref{eq:def-reg}}. We show an example of $X$ for which this does not hold. The idea of our example comes from \cite{GG}. Fix an increasing sequence of positive numbers $0 = x_0 < x_1 < x_2 < \dots$ such that $x_n \to \infty$ and let $\log 2 = y_0 < y_1 < y_2 < \dots$ be an increasing sequence of positive numbers such that $y_n \to \infty$. Define the following nonincreasing right-continuous step function
\[
T(t) = \sum_{n=0}^\infty e^{-y_n}\1_{[x_n,x_{n+1})}(t), \qquad t \geq 0.
\]
Let $X$ be a symmetric random variable such that $\p{X > t} = T(t)$, $t \geq 0$. In other words, $X$ is discrete taking the values $\pm x_n$ with probabilities $e^{-y_{n-1}}-e^{-y_n}$, $n = 1, 2, \dots$. Choosing $y_n$ growing \emph{much faster} relative to $x_n$, it is easily guaranteed that $\Lambda(t) < \infty$ for every $t \in \R$ (it suffices that $\sum e^{tx_n - y_{n-1}} < \infty$ for every $t >0$). We show that we can choose the $y_n$ such that $\limsup_{t\to\infty} \frac{-\log\p{X>t}}{\Lambda_*(t)} = \infty$ and consequently, $X$ does not have regular upper tails and hence is not \emph{good} {\red (in the sense of \eqref{eq:def-reg} of our definition)}. By Chernoff's inequality, $e^{-y_{n-1}} = \p{X \geq x_n} \leq e^{-\Lambda_*(x_n)}$, so $\Lambda_*(x_n) \leq y_{n-1}$ for every $n = 1, 2, \dots$. Therefore, by the convexity of $\Lambda_*$, for $\alpha \in (0,1)$,
\[
\Lambda_*(\alpha x_n + (1-\alpha)x_{n+1}) \leq \alpha y_{n-1} + (1-\alpha)y_n
\]
and we obtain
\begin{align*}
\frac{-\log \p{X > \alpha x_n + (1-\alpha)x_{n+1}}}{\Lambda_*(\alpha x_n + (1-\alpha)x_{n+1})} &= \frac{y_n}{\Lambda_*(\alpha x_n + (1-\alpha)x_{n+1})} \\
&\geq \frac{y_n}{\alpha y_{n-1} + (1-\alpha)y_n}.
\end{align*}
If we choose the sequence $(y_n)$ to grow fast enough, specifically such that $\frac{y_{n-1}}{y_n} \to 0$ (say $y_n = 2^{n^2}$, $n \geq 1$), then we get $\limsup_{t\to\infty} \frac{-\log\p{X>t}}{\Lambda_*(t)} \geq \frac{1}{1-\alpha}$. Letting $\alpha \to 1$ finishes the argument.

\subsection*{Acknowledgements.} 
We would like to thank Alan Frieze for the many illuminating discussions. We are indebted to Mikhail Lifshits for the helpful correspondence. We should like to thank anonymous referees for their very useful reports considerably improving our manuscript. Finally, we are grateful for the excellent working conditions provided by the undergraduate programme ``2020 Summer Experiences in Mathematical Sciences'' at Carnegie Mellon University.

\end{document}